\documentclass[12pt]{amsart}

\usepackage{amssymb}

\makeatletter
\@addtoreset{equation}{section}
\makeatother

\theoremstyle{plain}
\newtheorem{theorem}{Theorem}[section]
\newtheorem{lemma}[theorem]{Lemma}

\newtheorem{corollary}[theorem]{Corollary}
\newtheorem{question}[theorem]{Question}
      
\theoremstyle{definition}

\newtheorem{remark}[theorem]{Remark}
\newtheorem{example}[theorem]{Example}


\newcommand{\Ree}{{\mathbb R}}
\newcommand{\En}{{\mathbb N}}
\newcommand{\Que}{{\mathbb Q}}

\newcommand{\Del}{\Delta}
\newcommand{\eps}{\varepsilon}
\newcommand{\lam}{\lambda}

\newcommand{\fA}{\mathcal{A}}
\newcommand{\fB}{\mathcal{B}}
\newcommand{\fH}{\mathcal{H}}
\newcommand{\fN}{\mathcal{N}}

\newcommand{\spn}{\mathrm{span}}
\newcommand{\ind}{\mathrm{ind}}

\newcommand{\op}{\mathrm{op}}

\newcommand{\alg}{\mathrm{G}}
\newcommand{\cstarr}{\mathrm{C}^*_r}
\newcommand{\fal}{\mathrm{A}}
\newcommand{\fsal}{\mathrm{B}}
\newcommand{\bl}{\mathrm{L}}
\newcommand{\posbltwo}{\mathrm{PL^2}}
\newcommand{\str}{\mathrm{S}_r}
\newcommand{\trstr}{\mathrm{TS}_r}

\newcommand{\LUC}{\mathcal{LUC}}  
\newcommand{\SL}{\mathrm{SL}}  
\newcommand{\alp}{\alpha}
\newcommand{\sig}{\sigma}

 
\begin{document}
 
\author{Brian E. Forrest}
\address{Pure Mathematics, University of Waterloo, Waterloo, Ontario, N2L 3G1,Canada}
\email{beforrest@uwaterloo.ca}
 
\author{Nico Spronk}
\address{Pure Mathematics, University of Waterloo, Waterloo, Ontario, N2L 3G1, Canada}
\email{nspronk@uwaterloo.ca}

\author{Matthew Wiersma}
\address{Mathematical and Statistical Sciences, University of Alberta, Edmonton, Alberta, T6G 2G1, Canada}
\email{mwiersma@ualberta.ca}

\title[Existence of tracial states]{Existence of tracial states on reduced group C*-algebras}

\begin{abstract}
Let $G$ be a locally compact group.  It is not always the case that its reduced C*-algebra
$\cstarr(G)$ admits a tracial state.  We exhibit closely related necessary and sufficient conditions
for the existence of such.  We gain a complete answer when $G$ compactly generated.
In particular for $G$ almost connected, or more generally when $\cstarr(G)$ is nuclear,
the existence of a trace is equivalent to amenability.  We  exhibit two examples of classes of totally 
disconnected groups for which $\cstarr(G)$ does not admit a tracial state.
\end{abstract}

\subjclass{Primary 22D25; Secondary 22D05, 43A30, 43A07, 46L05, 46L35}

\thanks{The first two authors were partially supported by an NSERC Discovery Grants.}


 \date{\today}
 
 \maketitle
 

\section{Introduction and background}


If $G$ is a discrete group, then it is well-known that its reduced C*-algebra $\cstarr(G)$ admits a tracial state.
The condition of when this trace on $\cstarr(G)$ is unique is linked to the simplicity $\cstarr(G)$. Indeed, Kalantar and Kennedy \cite{kalantark} showed that the trace on $\cstarr(G)$ is unique when $\cstarr(G)$ is simple and a complete characterization of when $\cstarr(G)$ admits a unique tracial state for a discrete group $G$ was solved by Breuillard, Kalantar, Kennedy and Ozawa \cite{breuillardkko} and Haagerup \cite{haagerup} by using techniques developed for studying when $\cstarr(G)$ is simple.

The study of simplicity for $\cstarr(G)$ has recently migrated to the case when $G$ is a totally disconnected group due to significant work of Raum \cite{raum1,raum2} and Suzuki \cite{suzuki}. In his example, Suzuki showed that that if $\cstarr(G)$ admits a tracial state, then it must be unique.

We focus our attention on the following question.

\begin{question}\label{question}
What conditions on a locally compact group $G$ characterize when $\cstarr(G)$ admits a tracial state.
\end{question}

Ng \cite{ng} considered this question from the perspective of finding a condition which may be added to the 
assumption that $\cstarr(G)$ is nuclear to determine that $G$ is amenable.  He showed that this 
extra assumption is the existence of a tracial state.
We give an alternate proof to Ng's result in Section \ref{sec:nuclear}.
He used a theory of strictly amenable representations of C*-algebras; we use only one aspect of the theory
of amenable traces.

More generally we answer this Question \ref{question} fully when $G$ is compactly generated, 
in particular when $G$ is almost connected or when $G$ has property (T).  This is given in Section \ref{sec:main}.

\subsection{Background}
The {\it reduced C*-algebra} of $G$ is the C*-algebra generated by the left regular representation
on the $\bl^2$-space with respect to a left Haar measure:
$\lam_G:G\to \fB(\bl^2(G))$ ($\lam_G$ will be denoted $\lam$ when the group is unambiguous),
$\lam(s)g(t)=g(s^{-1}t)$ for $s$ in $G$, $g$ in $\bl^1(G)$ and a.e.\ $t$ in $G$.  
We have an integrated form $\lam:\bl^1(G)\to \fB(\bl^2(G))$, $\lam(f)g(t)=f\ast g(t)=\int_G f(s)g(s^{-1}t)\,dt$,
where this integral may be understood in the weak operator topology.  Then
$\cstarr(G)=\overline{\lam(\bl^1(G))}$, the norm closure in $\fB(\bl^2(G))$.

We shall use the duality $\cstarr(G)^*\cong \fsal_r(G)$ established by Eymard \cite{eymard}.
Let us briefly recall the definition of the {\it reduced Fourier-Stieltjes algebra} $\fsal_r(G)$.
By Godement \cite{godement} the cone $\posbltwo(G)$ of square-integrable positive
definite functions consists of certain elements of the form $\langle \lam(\cdot)g,g\rangle=\bar{g}\ast \check{g}$ where
$g\in\bl^2(G)$,  and $\check{g}(t)=g(y^{-1})$ for a.e.\ $t$ in $G$.  Furthermore, for any
$u$ in $\spn \posbltwo(G)\subset \bl^\infty(G)$ we have
\[
\sup\left\{\left| \int_G f(s)u(s)\, ds\right|:f\in\bl^1(G),\,\|\lam(f)\|\leq 1\right\}<\infty
\]
so $\spn \posbltwo(G)$ forms a subspace of $\cstarr(G)^*$, which is, in fact weak* dense; we denote this
space $\fsal_r(G)$.  The space may also be realized as the space of all matrix coefficients
$\langle \pi(\cdot)\xi,\eta\rangle$ where $\pi:G\to\fB(\fH)$ is a continuous unitary representation
which is weakly contained in $\lam$, and $\xi,\eta\in\fH$.
In particular, the norm closure $\fal(G)$ of $\spn \posbltwo(G)$ in $\fsal_r(G)$ is called the {\it Fourier algebra},
and may be realized all matrix coefficients of the left regular representation:
functions of the form $\langle \lam(\cdot)g,h\rangle=\bar{h}\ast \check{g}$ where
$g,h\in\bl^2(G)$.  We let
\[
\str(G)=\{u\in\fsal_r(G):u\text{ is positive definite and }u(e)=1\}
\]
which is the state space of $\cstarr(G)$, i.e.\ each $u$ corresponds to a positive norm one functional.

A state is called {\it tracial} if $\int_G f\ast f'(s)u(s)\, ds=\int_G f'\ast f(s)u(s)\, ds$ for each
$f,f'$ in $\bl^1(G)$.  By left invariance of the measure, we see that this is equivalent to having
$u(ts)=u(st)$ for each $s,t$ in $G$.  Hence we consider the set of tracial states, given by
\[
\trstr(G)=\{u\in\str(G):u(tst^{-1})=u(s)\text{ for all }s,t\text{ in }G\}.
\]
Our goal is to study when this set is non-empty.

The following observations are well-known.

\begin{lemma}\label{lem:restriction}
{\bf (i)} Let $H$ be a closed subgroup of $G$.  If there is $u$ in $\fsal_r(G)$ for which
the restriction $u|_H=1$, then $H$ is amenable.

{\bf (ii)} Let $N$ be an amenable normal subgroup of $G$.  Then
$\fal(G/N)$ is isomorphic to a subalgebra of elements in $\fsal_r(G)$.
\end{lemma}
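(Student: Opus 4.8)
I would use the description of $\fsal_r(G)$ recalled above and write $u=\langle\pi(\cdot)\xi,\eta\rangle$ with $\pi$ weakly contained in $\lam_G$, so that restriction gives $\pi|_H\prec\lam_G|_H$. A Mackey-type decomposition $\bl^2(G)\cong\bl^2(H)\otimes\bl^2(H\backslash G)$ identifies $\lam_G|_H$ with a multiple of $\lam_H$, twisted by the quasi-invariance cocycle, which does not affect the weak equivalence class; hence $\lam_G|_H$ is weakly equivalent to $\lam_H$ and $\pi|_H\prec\lam_H$. Consequently $u|_H=\langle\pi|_H(\cdot)\xi,\eta\rangle$ lies in $\fsal_r(H)$, and the hypothesis $u|_H=1$ says $1_H\in\fsal_r(H)=\cstarr(H)^*$. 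Viewed as a functional on $\cstarr(H)$, $1_H$ is $\lam_H(f)\mapsto\int_H f$, so its $\|\cdot\|_{\cstarr}$-boundedness means $\|\iota_H(f)\|\leq C\|\lam_H(f)\|$ for all $f\in\bl^1(H)$, where $\iota_H$ is the trivial representation; replacing $f$ by its convolution powers and letting the exponent tend to $\infty$ upgrades this to $\|\iota_H(f)\|\leq\|\lam_H(f)\|$, i.e.\ $\iota_H\prec\lam_H$, whence $H$ is amenable by Hulanicki's theorem. (One could instead extract a $\pi|_H$-invariant unit vector from equality in Cauchy--Schwarz after realizing $u|_H$ with norm-optimal vectors.)

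\textbf{Part (ii).} Let $q:G\to G/N$ denote the quotient map. I would invoke the classical fact that the inflation map $v\mapsto v\circ q$ is an isometric algebra embedding of the Fourier--Stieltjes algebra $\fsal(G/N)$ into $\fsal(G)$, with range the elements constant on cosets of $N$; restricted to $\fal(G/N)$ it thus gives an isometric algebra isomorphism onto a subalgebra of $\fsal(G)$, and it remains only to check that this subalgebra lies inside $\fsal_r(G)$. Now $\fal(G/N)$ consists of the matrix coefficients of $\lam_{G/N}$, so inflation produces exactly the matrix coefficients of $\lam_{G/N}\circ q$; and $\lam_{G/N}\circ q$ is the quasi-regular representation of $G$, which is $\mathrm{Ind}_N^G\iota_N$ (here $G/N$ carries a genuine invariant measure since it is a group). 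Amenability of $N$ gives $\iota_N\prec\lam_N$, so Fell's continuity of induction yields $\mathrm{Ind}_N^G\iota_N\prec\mathrm{Ind}_N^G\lam_N$, while induction in stages identifies $\mathrm{Ind}_N^G\lam_N$ with $\lam_G$. Hence $\lam_{G/N}\circ q\prec\lam_G$, so its matrix coefficients---and in particular the inflated image of $\fal(G/N)$---lie in $\fsal_r(G)$, as required.

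I do not anticipate a genuine obstacle: every ingredient (the Mackey decomposition of $\lam_G|_H$, Hulanicki's theorem, isometricity of inflation, the identification of the quasi-regular representation as $\mathrm{Ind}_N^G\iota_N$, and Fell continuity of induction) is classical, and the lemma is essentially the bookkeeping of assembling them. The one step deserving a line of care is the passage in (i) from ``$1_H$ is a bounded functional on $\cstarr(H)$'' to ``$\iota_H\prec\lam_H$,'' which the convolution-power self-improvement trick handles cleanly.
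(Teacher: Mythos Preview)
Your proposal is correct and follows essentially the same route as the paper: for (ii) you use exactly the paper's argument via Fell continuity of induction ($\ind_N^G 1_N\prec\ind_N^G\lam_N=\lam_G$), and for (i) both you and the paper reduce to showing $\fsal_r(G)|_H\subseteq\fsal_r(H)$ and then invoke that $1_H\in\fsal_r(H)$ forces amenability. The only cosmetic difference is that the paper obtains the restriction inclusion by citing Herz's restriction theorem $\fal(G)|_H=\fal(H)$ together with weak* density, whereas you give the underlying representation-theoretic reason directly (that $\lam_G|_H$ is weakly equivalent to $\lam_H$); your convolution-power self-improvement is also more explicit than the paper's one-line ``$1\in\fsal_r(H)$ tells us $H$ is amenable,'' but the content is the same.
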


\begin{proof}
(i) The restriction theorem (\cite{herz,arsac,delaported})
tells us that $\fal(G)|_H=\fal(H)$.  Hence weak* density of Fourier algebras in reduced
Fourier algebras provides that $\fsal_r(G)|_H\subseteq \fsal_r(H)$.  In particular, we see that
$1=u|_H\in \fsal_r(H)$, which tells us that $H$ is amenable.

(ii)  Let $\lam_{G:N}:G\to\fB(\bl^2(G/N))$ denote the
left quasi-regular representation of $G$, $\lam_{G:N}(s)g(tN)=g(s^{-1}tN)$ for a.e.\
$tN$ in $G/N$.  Then amenability of $N$ and
Fell's continuity of induction \cite{fell} gives weak containment relation
\[
\lam_{G:N}=\ind_N^G 1_N\prec \ind_N^G \lam_N=\lam_G
\]
and it follows that $\fal(G/N)$, which is identifiable with the space of matrix coefficients
of $\lam_{G:N}$, embeds naturally as a closed subspace of $\fsal_r(G)$.  (This is in the nature of
the functorial results of Arsac \cite{arsac}.)  We also remark that his result is also observed in \cite{leess}.
\end{proof}

\section{Main result}\label{sec:main}

We say that $G$ is an {\it invariant neighbourhood group}, or [IN]-group, if there exists
a relatively compact neighbourhood $W$ of $e$ for which $xWx^{-1}=W$ for all $x$ in $G$;
and a {\it small invariant neighbourhood group}, or [SIN]-group if $G$ admits a neighbourhood basis
of invariant neigbourhoods.
We observe that the class of [SIN]-groups contains discrete groups, abelian groups and compact groups.
Both classes are closed under products.  The class of [IN]-groups is closed under 
extension when the normal subgroup is compact.
See the discussion in the book of Palmer \cite[12.6.15 \& 12.6.16]{palmer}.  
Moreover, \cite[12.1.31]{palmer} shows that any
[IN]-group $G$ admits a compact normal subgroup $K$ by which $G/K$ is [SIN] -- we write [IN]=[K$^\text{SIN}$].
Any almost connected [SIN] group is of the form $V\rtimes K$ where $V$ is a vector group and $K$
has a finite action of $V$, and is hence amenable, hence so too is any almost connected [IN]-group.

\begin{theorem}\label{theo:main}
Consider the following conditions on $G$:

{\bf (i)} $\cstarr(G)$ admits a tracial state, i.e.\ $\trstr(G)\not=\varnothing$;

{\bf (ii)} $G$ admits a normal amenable closed subgroup $N$ for which $G/N$ is
an [IN]-group ---  we say $G$ is of class [Am$^\text{IN}$];

{\bf (ii')} $G$  admits a normal amenable closed subgroup $N$ for which each 
compactly generated open subgroup of $G/N$ is an [SIN]-group.

Then (ii) $\Rightarrow$ (i) $\Rightarrow$ (ii').  In particular, if $G$ is compactly generated, then
(i) $\Leftrightarrow$ (ii).
\end{theorem}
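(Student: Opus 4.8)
The plan is to prove the two implications (ii)$\Rightarrow$(i) and (i)$\Rightarrow$(ii') separately, and then note that for compactly generated $G$ the condition (ii') collapses to (ii), since a compactly generated group is its own unique compactly generated open subgroup, and [IN] is exactly the condition ``$G$ is [SIN] after quotienting by a compact normal subgroup'' which is automatic at the level of the whole group once $G/N$ is [SIN].

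For \textbf{(ii)$\Rightarrow$(i)}: suppose $N\trianglelefteq G$ is closed amenable with $G/N$ an [IN]-group. By Lemma~\ref{lem:restriction}(ii), $\fal(G/N)$ embeds isometrically into $\fsal_r(G)$, and a function on $G/N$ that is positive definite and conjugation-invariant on $G/N$ pulls back to a positive definite conjugation-invariant function on $G$; so it suffices to produce a tracial state on $\cstarr(G/N)$, i.e.\ to reduce to the case $N=\{e\}$ and $G$ an [IN]-group. Now use [IN]=[$\mathrm{K}^{\mathrm{SIN}}$]: there is a compact normal $K\trianglelefteq G$ with $G/K$ a [SIN]-group. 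Since $K$ is compact (hence amenable), Lemma~\ref{lem:restriction}(ii) again lets us further reduce to $G$ itself a [SIN]-group. For a [SIN]-group, take an invariant relatively compact neighbourhood $W$ of $e$ and set $g=1_W/\sqrt{|W|}\in\bl^2(G)$; then $u=\langle\lam(\cdot)g,g\rangle=\bar g\ast\check g\in\fal(G)\subseteq\fsal_r(G)$ is positive definite with $u(e)=\|g\|_2^2=1$, and invariance of $W$ under conjugation forces $u(tst^{-1})=u(s)$ after the change of variables in the convolution integral. Hence $u\in\trstr(G)$.

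For \textbf{(i)$\Rightarrow$(ii')}: let $u\in\trstr(G)$. The idea is that the ``zero set behaviour'' of a tracial state controls amenability. Consider $N=\{s\in G: u(s)=1\}$. Since $u$ is positive definite with $u(e)=1$, standard Cauchy--Schwarz estimates for positive definite functions show $N$ is a closed subgroup, and conjugation-invariance of $u$ makes $N$ normal; Lemma~\ref{lem:restriction}(i) (with the function $u$ itself restricted to $N$) shows $N$ is amenable. It remains to show that every compactly generated open subgroup of $G/N$ is [SIN]. Passing to $G/N$, the induced state $\dot u$ on $\cstarr(G/N)$ is tracial and has $\dot u(s)=1$ only at $s=e$. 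Fix a compactly generated open subgroup $L$ of $G/N$, with compact generating set $Q=Q^{-1}$. Restricting $\dot u$ to $L$ gives a tracial state on $\cstarr(L)$ which is $1$ only at $e$; by continuity at $e$ and conjugation-invariance, for any $\eps>0$ the set $V_\eps=\{s\in L: \mathrm{Re}\,\dot u(s)>1-\eps\}$ is a conjugation-invariant neighbourhood of $e$ in $L$. The remaining work is to show that some such $V_\eps$, intersected with a fixed compact set, is relatively compact --- equivalently that $\dot u$ ``separates $e$ from infinity'' on $Q$-orbits --- which upgrades $L$ from merely having small \emph{invariant} neighbourhoods to having a \emph{relatively compact} invariant neighbourhood, i.e.\ to [SIN] (a compactly generated [IN]-group that additionally has arbitrarily small invariant neighbourhoods). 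One feeds in that $\dot u$, being in $\fsal_r$, is a matrix coefficient of a representation weakly contained in $\lam$, so its near-$1$ set cannot be ``too large'' relative to $\lam$ without forcing amenability of a larger subgroup, contradicting minimality/maximality of $N$.

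The \textbf{main obstacle} is the implication (i)$\Rightarrow$(ii'), specifically extracting the [SIN] structure of compactly generated open subgroups of $G/N$ from the mere existence of a tracial state. The delicate point is that a tracial state gives conjugation-invariant near-$e$ sets for free, but these need not be relatively compact; one must combine the weak-containment-in-$\lam$ property of $u$ with the compact generation of $L$ to rule out the pathology, and to choose $N$ correctly (large enough to absorb all the amenable ``slack'' but no larger). I expect this to require a careful analysis of how the positive definite function $u$ behaves on translates of a compact generating set, likely via an ultrafilter/weak* limit argument or an appeal to the structure theory of [IN]-groups in Palmer's book; everything in (ii)$\Rightarrow$(i) and the final equivalence for compactly generated $G$ is then routine.
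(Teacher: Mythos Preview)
Your argument for (ii)$\Rightarrow$(i) is correct and close to the paper's, though the paper works directly with an [IN] quotient $G/N$ rather than reducing further to [SIN]: with $W\subseteq G/N$ a compact invariant neighbourhood of $e$, the function $u(sN)=m((sN)W\cap W)/m(W)=\langle\lam_{G/N}(sN)1_W,1_W\rangle/m(W)$ is already a tracial state, since $(tN)W(t^{-1}N)=W$ gives conjugation invariance directly. Your extra reduction via $[\mathrm{IN}]=[\mathrm{K}^{\mathrm{SIN}}]$ is harmless.

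For (i)$\Rightarrow$(ii') there is a genuine gap, and your diagnosis of the obstacle points in the wrong direction. You correctly take $N=u^{-1}(\{1\})$, pass to $G/N$, and observe that the level sets $V_\eps=\{s:u(s)>1-\eps\}$ (after replacing $u$ by $|u|^2$ so $0\le u\le 1$) are conjugation-invariant open neighbourhoods of $e$. But you then suggest that making one of these \emph{relatively compact} requires weak-containment or ultrafilter arguments exploiting that $u\in\fsal_r$. It does not: the argument is entirely elementary and uses nothing about $u$ beyond continuity, conjugation invariance, and $u^{-1}(\{1\})=\{e\}$.

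The trick the paper uses, and which you are missing, is this. Let $H=\bigcup_n K^n$ with $K$ a symmetric compact neighbourhood of $e$. Set $W_n=V_{1/n}\cap K^3$. These are relatively compact by construction; the issue is invariance, not compactness. Since $u^{-1}(\{1\})=\{e\}$ and $K^3\setminus U$ is compact for any neighbourhood $U$ of $e$, one gets $W_n\subseteq U$ for large $n$; in particular $W_n\subseteq K$ eventually. For such $n$ and any $s\in K$,
\[
sW_ns^{-1}\subseteq KW_nK\subseteq K^3\quad\text{and}\quad sW_ns^{-1}\subseteq sV_{1/n}s^{-1}=V_{1/n},
\]
so $sW_ns^{-1}\subseteq V_{1/n}\cap K^3=W_n$. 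Equality of measures (or symmetry) then gives $sW_ns^{-1}=W_n$ for $s\in K$, hence for all $s\in H$. Thus $\{W_n\}$ is a base of $H$-invariant neighbourhoods and $H$ is [SIN]. No representation-theoretic input beyond Lemma~\ref{lem:restriction}(i) (to get amenability of $N$) is needed.
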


See Remark \ref{rem:suzuki} below to see that (ii') does not imply (i), in general.  Also, comments
above show that [Am$^\text{IN}$]=[Am$^\text{SIN}$].

\begin{proof} {\bf (ii)} $\Rightarrow$ {\bf (i)}.  Lemma \ref{lem:restriction} (ii)
shows that $\fal(G/N)$ identifies as a subalgebra of $\fsal_r(G)$.
Let $W$ be a compact neighbourhood of $G/N$ which is invariant under inner automorphisms.
We let for $sN$ in $G/N$
\[
u(sN)=\langle \lam_{G/N}(sN)1_W,1_W\rangle=m((sN)W\cap W)
\]
which defines an element of $\posbltwo(G/N)\subseteq\fal(G/N)$, where $m$ denotes the Haar measure.  
If $t\in G$ we have
\[
u(tst^{-1}N)=m((tst^{-1}N)W\cap W)=m(tN[(sN)W\cap W]t^{-1}N)
\]
since $(tN)W(t^{-1}N)=W$.  
Hence $s\mapsto \frac{1}{m(W)}u(sN)$ defines an element of $\trstr(G)$.

{\bf (i)}$\Rightarrow${\bf (ii')}.  Let $u\in\trstr(G)$.  We may replace $u$ by $|u|^2=u\bar{u}$
in $\trstr(G)$, so we may assume that $0\leq u\leq 1$.  Any set
$S=u^{-1}(B)$, where $\varnothing\not= B\subseteq [0,1]$ is necessarily
satisfies $sSs^{-1}=S$ for all $s$ in $G$.

Let $N=u^{-1}(\{1\})$.  It is well-known that $N$ is a subgroup.  Indeed, write
$u=\langle \pi(\cdot)\xi,\xi\rangle$ where $\pi:G\to\fB(\fH)$ is a unitary representation and
$\|\xi\|=1$ in $\fH$, and uniform convexity provides that $N=\{s\in G:\pi(s)\xi=\xi\}$.
Hence $N$ is a normal subgroup with $u$ constant on cosets of $N$.  
Lemma \ref{lem:restriction} (i) tells us that $N$ is amenable.

For ease of notation, let us now replace $G$ by $G/N$ and assume that
\begin{equation}\label{eq:assumption}
u^{-1}(\{1\})=\{e\}.
\end{equation}
Let $H$ be a compactly generated open subgroup of $G$, so there is a symmetric
compact neighbourhood $K$ of $e$ for which $H=\bigcup_{n=1}^\infty K^n$.
Let $0<\eps<1$ be such that
\[
u^{-1}((1-\eps,1])\cap K^3\subset (K^3)^\circ\text{ (interior).}
\]
If $K$ is open, $\eps$ may be chosen arbitrarily; if not (\ref{eq:assumption}) provides $\eps$ so $u(s)<1-\eps$ for
$s$ in $\partial(K^3)$ (boundary).  For natural numbers $n$ for which $n>1/\eps$ we let
\[
V_n=u^{-1}((1-\tfrac{1}{n},1])\text{ and }W_n=V_n\cap K^3.
\]
If $U$ is an open neighbourhood of $e$, let $n$ be so $u(s)<1-\frac{1}{n}$ for $s$ in $K^3\setminus U$
--- (\ref{eq:assumption}) provides that we may find $\frac{1}{n}<\min_{s\in K^3\setminus U} [1-u(s)]$ ---
and we see that $W_n\subseteq U$.  Hence $\{W_n\}_{n=\lceil 1/\eps\rceil}^\infty$ is a neighbourhood base for
$e$.  In particular, for sufficienlty large $n$, say $n\geq n_0\geq \lceil 1/\eps\rceil$, we have $W_n\subseteq K$.  
Then for such $n$, the symmetry of $K$ provides that
\[
\bigcup_{s\in K}sW_ns^{-1}\subseteq KW_nK\subseteq K^3.
\]
But also
\[
\bigcup_{s\in G}sW_ns^{-1}\subseteq \bigcup_{s\in G}sV_ns^{-1}=V_n
\]
so we conclude that
\[
\bigcup_{s\in K}sW_ns^{-1}\subseteq V_n\cap K^3=W_n
\]
whence $sW_ns^{-1}=W_n$ for each $s$ in $K$.  It is immediate that $sW_ns^{-1}=W_n$ for each 
$s$ in $H=\bigcup_{n=1}^\infty K^n$.  Hence $\{W_n\}_{n=n_0}^\infty$ shows that $H$ is a [SIN]-group.
\end{proof}

For separable groups, the following is proved by Ng \cite{ng}, using much more indirect techniques.

\begin{corollary}
If $G$ is almost connected, then $\cstarr(G)$ admits a tracial state if and only if $G$ is amenable.
\end{corollary}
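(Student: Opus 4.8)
The plan is to deduce the corollary directly from Theorem \ref{theo:main} together with the structural facts about almost connected groups collected just before the theorem. Since an almost connected group is in particular compactly generated, Theorem \ref{theo:main} gives the equivalence (i) $\Leftrightarrow$ (ii): $\cstarr(G)$ admits a tracial state if and only if $G$ is of class [Am$^{\text{IN}}$], i.e.\ there is a closed normal amenable subgroup $N$ with $G/N$ an [IN]-group. So the real content is to show that, for $G$ almost connected, being of class [Am$^{\text{IN}}$] is equivalent to amenability of $G$.

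The easy direction is that amenability implies a tracial state: if $G$ is amenable we may take $N=G$, which is amenable and closed and normal, and $G/N$ is trivial, hence an [IN]-group; thus $G$ is of class [Am$^{\text{IN}}$] and (ii) $\Rightarrow$ (i) applies. (Alternatively, for amenable $G$ the constant function $1$ lies in $\fsal_r(G)$ and is a tracial state.) For the converse, suppose $G$ is almost connected and admits a closed normal amenable $N$ with $G/N$ an [IN]-group. First I would observe that $G/N$ is again almost connected, being a quotient of an almost connected group (the connected component of the identity of $G/N$ is the image of the connected component of $G$, and it is cocompact). Now invoke the remark preceding the theorem: any almost connected [IN]-group is amenable --- the argument given there is that an almost connected [SIN]-group has the form $V\rtimes K$ with $V$ a vector group and $K$ compact (hence amenable), and [IN]=[K$^{\text{SIN}}$] reduces the [IN] case to the [SIN] case via a compact normal subgroup. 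Hence $G/N$ is amenable. Finally, amenability is closed under extensions, so from $N$ amenable and $G/N$ amenable we conclude $G$ is amenable.

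The main obstacle, such as it is, lies not in any hard estimate but in being careful with the structure theory invoked as a black box: one must make sure that the quotient $G/N$ inherits almost connectedness and that the cited facts ([IN]=[K$^{\text{SIN}}$], the $V\rtimes K$ description of almost connected [SIN]-groups, amenability being an extension-closed property) are applied to groups satisfying their hypotheses. No genuinely new argument is needed beyond assembling Theorem \ref{theo:main} with \cite[12.6.15, 12.6.16, 12.1.31]{palmer} and the standard permanence properties of amenability.
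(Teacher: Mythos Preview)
Your proof is correct and follows essentially the same approach as the paper: both reduce to Theorem \ref{theo:main} via the fact that almost connected groups are compactly generated, and then use that an almost connected [IN]-group is amenable (together with closure of amenability under extensions) to conclude. The paper's proof is terser, leaving the passage to $G/N$ and the extension argument implicit, while you spell these out.
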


\begin{proof}
This follows immediately from the theorem, above, and the fact that almost connected groups
are compactly generated and that almost connected [IN]-groups are
amenable, thanks to the structure results indicated at the beginning of this section.
\end{proof}

To put Theorem \ref{theo:main} in context, we shall use the following observation.

\begin{lemma}\label{lem:INcns}
Let $G$ be a totally disconnected [IN]-group.  Then $G$ admits an open compact normal subgroup.
\end{lemma}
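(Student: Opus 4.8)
The plan is to reduce to the [SIN] case, dispose of that case using van Dantzig's theorem, and then pull the resulting subgroup back to $G$.

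Since $G$ is an [IN]-group, the structure result [IN]$=$[K$^\text{SIN}$] recalled above (\cite[12.1.31]{palmer}) supplies a compact normal subgroup $K$ of $G$ with $G/K$ a [SIN]-group. As $G$ is totally disconnected and $K$ is compact, $G/K$ is again totally disconnected: for any compact open subgroup $V$ of $G$, the set $VK$ is a compact open subgroup of $G$ containing $K$, and a routine argument (covering $K$ by finitely many cosets and intersecting) shows that the images $VK/K$ form a neighbourhood basis of the identity coset in $G/K$ consisting of compact open subgroups. It therefore suffices to exhibit a compact open normal subgroup of the totally disconnected [SIN]-group $G/K$ and take its full preimage $N$ in $G$: then $N$ is open (preimage of an open set), normal (preimage of a normal subgroup), and an extension of a compact group by the compact group $K$, hence compact.

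So suppose now that $G$ is itself totally disconnected and [SIN]. By van Dantzig's theorem $G$ has a compact open subgroup $V$. Since $G$ has a neighbourhood basis of invariant neighbourhoods at $e$, there is one, say $W$, with $W\subseteq V$. Let $N=\langle W\rangle$ be the subgroup of $G$ generated by $W$. Then $N$ is open because $W$ is a neighbourhood of $e$; $N$ is normal because $xWx^{-1}=W$ for every $x\in G$ forces $xNx^{-1}=N$; and $N\subseteq V$ because $V$ is a subgroup containing $W$. An open subgroup is closed, and $V$ is compact, so $N$ is compact. Thus $N$ is a compact open normal subgroup of $G$.

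The [SIN] step is essentially immediate once one invokes van Dantzig, so the only point needing a little care is the reduction itself, namely that $G/K$ inherits total disconnectedness from $G$; this is exactly where compactness of $K$ is used, both to keep the pushed-forward subgroups $VK/K$ small enough to form a neighbourhood basis and, at the very end, to guarantee that the pulled-back subgroup is compact.
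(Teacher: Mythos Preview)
Your proof is correct and follows essentially the same route as the paper: reduce to a totally disconnected [SIN] quotient via the compact normal $K$ from [IN]$=$[K$^{\text{SIN}}$], then inside a van Dantzig compact open subgroup take an invariant neighbourhood and generate a normal open subgroup from it. The only cosmetic difference is that the paper inserts an extra step---choosing a compact open subgroup $L\subseteq W$ before generating---whereas you generate directly from $W$; your version is slightly leaner, and your justification $\langle W\rangle\subseteq V$ (smallest subgroup containing $W$, and $V$ is one such) is cleaner than worrying about symmetry.
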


\begin{proof}
As noted at the beginning of this section, there is a compact normal subgroup $K$ for which $G/K$ is a [SIN]-group.
Then $G/K$ is totally disconnected and there is an open compact subgroup $M$, and an invariant
neighbourhood $W$, so $K\subseteq W\subseteq M$.  We again use total disconnectivity of $G/K$ to
find an open subgroup $L$ with $K\subseteq L\subseteq W$.  The the subgroup generated by
$\bigcup_{x\in G}xLx^{-1}$ is open and normal, but contained in $M$ and hence compact.
\end{proof}

Actually, the above lemma establishes that a totally disconnected [SIN]-group is {\it pro-discrete}, i.e.\
admits arbitrarily small compact open normal subgroups.

\begin{corollary}
The following three classes of locally compact groups coincide

{\bf (a)} [AM$^\text{IN}$]\qquad\qquad\qquad {\bf (b)} [AM$^\text{SIN}$]

{\bf (c)} groups admitting an open normal amenable subgroup.
\end{corollary}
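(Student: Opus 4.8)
The plan is to establish the four implications (b)$\Rightarrow$(a), (a)$\Rightarrow$(b), (c)$\Rightarrow$(a) and (a)$\Rightarrow$(c); the first three are routine manipulations with group extensions and only the last carries real content. The implication (b)$\Rightarrow$(a) is immediate, since every [SIN]-group is an [IN]-group. For (c)$\Rightarrow$(a): if $N$ is an open normal amenable subgroup of $G$, then $G/N$ is discrete, hence [SIN] and a fortiori [IN], so $G$ satisfies (a). For (a)$\Rightarrow$(b): given a closed amenable normal subgroup $N$ of $G$ with $G/N$ an [IN]-group, \cite[12.1.31]{palmer} (the identity [IN]$=$[K$^\text{SIN}$]) supplies a compact normal subgroup $K$ of $G/N$ with $(G/N)/K$ an [SIN]-group; the preimage $N'$ of $K$ in $G$ is then a closed normal subgroup containing $N$ with $N'/N\cong K$ compact, so $N'$ is amenable, while $G/N'\cong(G/N)/K$ is [SIN]. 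Hence $G$ satisfies (b).

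The real content is (a)$\Rightarrow$(c). Using the implication (a)$\Rightarrow$(b) just established, I may assume $G$ has a closed amenable normal subgroup $N$ with $L:=G/N$ an [SIN]-group; it then suffices to produce an open normal amenable subgroup $U$ of $L$, for its preimage $M$ in $G$ is open and normal, satisfies $M/N\cong U$ with $N$ a closed normal amenable subgroup of $M$, and is therefore amenable. To produce $U$, I would split $L$ along its identity component $L_0$. As $L_0$ is characteristic in $L$, restricting the invariant neighbourhoods of $e$ in $L$ to $L_0$ produces a neighbourhood basis of $e$ in $L_0$ made of conjugation-invariant sets, so $L_0$ is a connected [SIN]-group and hence is amenable by the structure results recalled at the beginning of this section. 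Pushing invariant neighbourhoods forward along the open quotient map $L\to L/L_0$ shows likewise that $L/L_0$ is [SIN]; being also totally disconnected, it admits a compact open normal subgroup $\bar U$ by Lemma \ref{lem:INcns}. Let $U$ be the preimage of $\bar U$ in $L$. Then $U$ is open and normal, $L_0$ is a closed normal amenable subgroup of $U$ with $U/L_0\cong\bar U$ compact, and hence $U$ is amenable. This $U$ is the subgroup sought.

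The one genuinely non-formal move is the passage to the identity component of $L$: it is there that one simultaneously gains amenability --- $L_0$ being a connected [SIN]-group, which the structure theory of this section renders amenable --- and gains openness --- $L/L_0$ being a totally disconnected [SIN]-group, hence pro-discrete, which is what supplies a compact open normal subgroup. Checking that $L_0$ and $L/L_0$ are again [SIN] is the routine verification that the invariant neighbourhoods of $L$ restrict, respectively project, to invariant neighbourhood bases of the identity, using only that $L_0$ is characteristic and that the quotient map is open; the rest is the standard permanence of amenability and of the [SIN] and [IN] properties under quotients and under extensions with compact normal subgroup.
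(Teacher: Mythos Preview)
Your proof is correct and follows essentially the same route as the paper: pass to $G/N$, split along its identity component (connected [IN]/[SIN] hence amenable), and invoke Lemma~\ref{lem:INcns} on the totally disconnected quotient to find a compact open normal subgroup whose preimage in $G$ is the desired open normal amenable subgroup. The only difference is that you first reduce from [IN] to [SIN] via [IN]$=$[K$^\text{SIN}$] before splitting along the identity component, whereas the paper stays with [IN] throughout; since Lemma~\ref{lem:INcns} is stated for [IN]-groups and connected [IN]-groups are already amenable, your preliminary reduction to [SIN] is harmless but unnecessary.
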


\begin{proof}
Clearly (c) implies (b) implies (a).  Let us see that (a) implies (c).  If $G$ is in [AM$^\text{IN}$],
and $N$ is a normal amenable subgroup for which $G/N$ is [IN].  Then the connected component of the identity
$(G/N)_e$ is a connected [IN]-group, hence amenable, and $(G/N)/(G/N)_e$,
is a totally disconnected [IN]-group. Thus Lemma \ref{lem:INcns} provides an open normal compact 
subgroup of  $(G/N)/(G/N)_e$, whose pre-image with respect to
the quotient map $q:G\to (G/N)/(G/N)_e$ is the desired open normal amenable subgroup.
\end{proof}

\begin{remark}\label{rem:suzuki}
Condition (ii') does not guarantee the existence of a tracial state on $\cstarr(G)$.

Suzuki \cite{suzuki} considers totally disconnected groups of the form 
$G=\bigoplus_{n\in\En}\Gamma_n\rtimes\prod_{n\in\En}F_n$
where each pair $(\Gamma_n,F_n)$ consists of a discrete group and a finite group acting on
on it for which $\cstarr(\Gamma_n\rtimes F_n)$ admits unique tracial state.  
Here $F=\prod_{n\in\En}F_n$ is equipped with 
product topology, and the action of $F$ on $\Gamma=\bigoplus_{n\in\En}\Gamma_n$ is component wise.
Each compactly generated open subgroup of $G$ is contained in one
of the form $H=\Lambda\rtimes K$ where $\Lambda
\subseteq \bigoplus_{j=1}^m\Gamma_{n_j}$ 
and $K$ is an open subgroup of $F$.  Then
$K\cap\prod_{n\in\En\setminus\{n_1,\dots,n_m\}}F_n$ is an open, 
compact normal subgroup of $H$, thereby showing that $H$
is an [IN]-group.  Thus (ii') of Theorem \ref{theo:main} holds.  

The sequences $K_n=\prod_{k=n+1}^\infty F_n$ and
$L_n=\bigoplus_{k=1}^n\Gamma_k\rtimes F$ satisfy that each $K_n\lhd L_n$, and with each
$\cstarr(L_n/K_n)\cong \bigotimes_{\min,k=1}^n\cstarr(\Gamma_k\rtimes F_k)$ admitting a unique tracial state.
On $\lam(\xi_{K_n})\cstarr(L_n)\cong\cstarr(L_n/K_n)$, where $\xi_{K_n}=\frac{1}{m(K_n)}1_{K_n}$,
this state is given by
\[
u_n(x)=\frac{1}{m(K_n)}\langle\lam(x)1_{K_n},1_{K_n}\rangle=\frac{m(xK_n\cap K_n)}{m(K_n)}\text{ for }x\text{ in }L_n.
\]

Now suppose there is $u$ in $\trstr(G)$.  Then $\xi_{K_n}\ast u\ast\xi_{K_n}$ is a sequence of positive definite 
elements converging in $\fsal_r(G)$-norm to $u$, hence converging uniformly.  Hence
we have uniform on compact convergence 
\[
u=\lim_n \xi_{K_n}\ast u\ast\xi_{K_n}|_{L_n}=\lim_n u_n.
\]
But on any fixed $L_{n_0}$,  the latter sequence converges pointwise to $1_{\{e\}}$, which is
not continuous as $G$ is not discrete.  Thus $\trstr(G)=\varnothing$.
\end{remark}

\begin{remark}\label{rmk}
It curious to note how some conditions in Theorem \ref{theo:main} relate to {\it inner amenability},
the property that $\bl^\infty(G)$ admits a conjugation-invariant mean.

{\bf (i)}  If $G$ admits the property that each compactly generated open subgroup $H$ is [IN], then $G$
is inner amenable.  

Indeed, for each such $H$ let $W_H$ be a relatively compact invariant neighbourhood.
Any cluster point in $\bl^\infty(G)^*$ of the net of elements $\xi_H=\frac{1}{m(W_H)}1_{W_H}$, indexed over increasing 
$H$, is a conjugation invariant mean.  

Moreover, if we can arrange that $W_H\searrow\{e\}$, i.e.\
eventually enter any neighbourhood of $e$, then $(\xi_H)_H$ shows that $G$ has the quasi-small invariant
neighbourhood property (QSIN) of Losert and Rindler \cite{losertr}.  Notice that this occurs in Suzuki's example,
above.

{\bf (ii)} 
%
It is not the case that [Am$^\text{IN}]$-groups are generally inner amenable.  In particular,
the class of inner amenable groups is not closed under extension.

Breuillard and Gelander \cite{breuillardg} provide a dense free group $F_6$ of rank $6$ in $S=\SL_2(\Ree)$.
Let $G=\Ree^2\rtimes F_6$.  If $M$ were a conjugation-invariant mean on $\bl^\infty(G)$, then
it would restrict to one on $\ell^\infty(F_6)\cong\bl^\infty (G/\Ree^2)$.  An observation of Effros \cite{effros}
(see, also \cite{losertr}) shows that the only such mean on $\ell^\infty(F_6)$ is evaluation at $e$.
Hence $M$ is concentrated on the identity component $\bl^\infty(\Ree^2)$, hence giving an $F_6$-invariant
mean on the latter.  

A standard procedure, akin to that of building nets satisfying Reiter's property (P$_1$), allows us
to construct a net $(f_\alp)$ of probablilities in $\bl^1(\Ree^2)$ for which
\begin{equation}\label{eq:reiter}
	\|f_\alp\circ\sig-f_\alp\|_1\rightarrow 0\text{ for each }\sig\text{ in }F_6.
\end{equation}
Since the action of $\SL_2(\Ree)$ is continuous on $\Ree^2$, it is continuous on $\bl^1(\Ree^2)$, and hence,
by density of $F_6$ in $S$, (\ref{eq:reiter}) holds for $\sig$ in $S$.

Since the nilpotent part $N$ of the Iwasawa decomposition $S=KAN$ is the stabilizer of 
${\tiny \begin{bmatrix} 1 \\ 0\end{bmatrix}}$ in $S$, we obtain an isomorphism of $S$-spaces
\begin{equation}\label{eq:Siso}
	k(\theta)a(t)N\mapsto k(\theta)a(t)\begin{bmatrix} 1 \\ 0\end{bmatrix}
	=e^t\begin{bmatrix} \cos\theta \\ \sin\theta \end{bmatrix}:S/N\to\Ree^2\setminus \{0\}
\end{equation}
where $k(\theta)$ is the usual rotation by angle $\theta$ and $a(t)$ diagonal matrix with entries
$e^t$ and $e^{-t}$.  The usual formula for the Haar integral in $S$ gives invariant integral
$\int_\Ree\int_0^{2\pi}f(k(\theta)a(t)N)e^{2t}\,d\theta\,dt$ on $S/N$, while the Jacobian of
$(\theta,t)\mapsto e^t{\tiny \begin{bmatrix} \cos\theta \\ \sin\theta\end{bmatrix}}$ 
in $\Ree^2\setminus\{0\}$ is $e^{2t}$.  Hence
the identification (\ref{eq:Siso}) identifies the $S$-invariant measure on $S/N$ with the Lebesgue measure
on $\Ree^2\setminus\{0\}$.

Thus the net $(f_\alp)\subset\bl^1(\Ree^2)$, above, induces a  net of means $(m_\alp)$ on the space of
left uniformly continuous functions $\LUC(S/N)$ which is asymptotically invariant for left translation.
Any weak* cluster point of $(m_\alp)$ must be an invariant mean, and hence $G/N$ is 
an amenable homogeneous space in the sense of Eymard \cite{eymard1}.  Since $N$ is itself
amenable, \cite[\S3 1$^\circ$]{eymard1} would imply that $S$ is amenable, which is absurd.  Hence
a mean $M$, as above, cannot exist.
\end{remark}

We wish to thank Jason Crann for pointing out an error in an earlier version of Remark \ref{rmk}.  This also addresses an error in \cite[Rem.\ 2.4]{leess}.

\section{Nuclearity and amenable traces}\label{sec:nuclear}

We wish to give another perspective to the question of existence of a trace.  For unital C*-algebras
the following is well-known.  

\begin{lemma}\label{lem:funcmaxtp}
Let $\fA$ be a non-unital C*-algebra and $\tau$ a tracial state on $\fA$.  Then
the map
\begin{equation}\label{eq:tauontp}
a\otimes b^\op\mapsto \tau(ab)
\end{equation}
extends to a state on the maximal tensor product $\fA\otimes_{\max}\fA^\op$.
\end{lemma}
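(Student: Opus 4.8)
The plan is to build the desired state by first restricting attention to the hereditary structure around the trace and then pulling back through a canonical representation. Concretely, since $\tau$ is a tracial state on the non-unital C*-algebra $\fA$, I would pass to the GNS representation $(\pi_\tau, \fH_\tau, \xi_\tau)$ associated to $\tau$, where $\xi_\tau$ is a cyclic vector with $\tau(a) = \langle \pi_\tau(a)\xi_\tau, \xi_\tau\rangle$. The trace identity $\tau(ab) = \tau(ba)$ says precisely that the vector state at $\xi_\tau$ is a trace on $\pi_\tau(\fA)$, so $\xi_\tau$ is a tracial vector for the von Neumann algebra $M := \pi_\tau(\fA)''$ once we check that $\xi_\tau$ is cyclic and separating for $M$ (the standard GNS argument). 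This gives a canonical anti-isomorphism $J: M \to M'$ implemented by the modular conjugation, with $J\pi_\tau(a)J$ lying in the commutant.

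Next I would use the commuting pair of representations to produce a representation of the maximal tensor product. Define $\rho: \fA^\op \to \fB(\fH_\tau)$ by $\rho(b^\op) = J\pi_\tau(b)^* J = J\pi_\tau(b^*)J$; one checks this is a $*$-homomorphism of $\fA^\op$ (the $\op$ reversal is exactly compensated by the anti-multiplicativity of $J$ and the adjoint), and its image commutes with $\pi_\tau(\fA)$ because $JMJ = M' \subseteq \pi_\tau(\fA)'$. By the universal property of the maximal tensor product, the pair $(\pi_\tau, \rho)$ of commuting representations induces a $*$-homomorphism $\pi_\tau \otimes \rho : \fA \otimes_{\max} \fA^\op \to \fB(\fH_\tau)$ with $(\pi_\tau\otimes\rho)(a\otimes b^\op) = \pi_\tau(a)J\pi_\tau(b^*)J$. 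Then the vector state at $\xi_\tau$ gives a state $\Phi$ on $\fA\otimes_{\max}\fA^\op$, and I would compute
\[
\Phi(a\otimes b^\op) = \langle \pi_\tau(a) J\pi_\tau(b^*)J\,\xi_\tau, \xi_\tau\rangle = \langle \pi_\tau(a)\pi_\tau(b)\xi_\tau, \xi_\tau\rangle = \tau(ab),
\]
using $J\xi_\tau = \xi_\tau$ and the defining relation $J\pi_\tau(b^*)J\xi_\tau = \pi_\tau(b)\xi_\tau$ of the modular data (valid since $\xi_\tau$ is tracial, so the modular operator is trivial). This is exactly the map \eqref{eq:tauontp}, so $\Phi$ is the required extension.

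The one place requiring genuine care — the main obstacle — is the non-unitality of $\fA$. The modular-theory machinery (cyclic separating vector, modular conjugation $J$) is cleanest in the von Neumann algebra $M = \pi_\tau(\fA)''$, which \emph{is} unital, so I would do all the Tomita–Takesaki bookkeeping there and only at the end restrict the resulting state on $M \otimes_{\max} M^\op$ (or rather evaluate the vector functional) back to $\fA\otimes_{\max}\fA^\op$ via the natural $*$-homomorphism $\fA\otimes_{\max}\fA^\op \to M \otimes_{\max} M^\op$. One must also confirm that $\fA^\op$ maps into $M^\op$ compatibly, i.e.\ that $\rho$ really is the restriction of $JMJ$-valued data; this is routine but worth stating. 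An alternative, slicker route that sidesteps modular theory entirely: observe that for a finite trace the GNS space carries a commuting left and right action of $\fA$ directly (left multiplication and right multiplication on $\fA$ inside $L^2(\fA,\tau)$), giving commuting representations of $\fA$ and $\fA^\op$ whose product representation, paired against $\xi_\tau$, yields $\tau(ab)$; this avoids $J$ altogether and handles non-unitality transparently since one never needs a unit. I would likely present this second version as the actual proof and relegate the modular-theory description to a remark.
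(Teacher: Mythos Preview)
Your ``alternative, slicker route'' at the end is exactly the paper's proof: the paper passes to the unitization $\fA_1$, forms the GNS space $\fH$ as the completion of $\fA_1/\fN$ with $\fN=\{a:\tau(a^*a)=\tau(aa^*)=0\}$, and lets $\pi$ and $\pi^{\op}$ act by left and right multiplication, so that $\tau(ab)=\langle\pi(a)\pi^{\op}(b^{\op})(1+\fN),1+\fN\rangle$ is a vector state for the commuting pair and hence extends to $\fA\otimes_{\max}\fA^{\op}$.

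Your first route via Tomita--Takesaki is correct but is a detour that collapses to the same computation: since $\tau$ is tracial the modular operator is the identity, the conjugation is $J(a+\fN)=a^*+\fN$, and then $J\pi_\tau(b^*)J$ is precisely right multiplication by $b$, i.e.\ the paper's $\pi^{\op}(b^{\op})$. So nothing is gained by invoking modular theory here. One small point worth tightening in your write-up: the second route does not quite ``never need a unit''; you still need a distinguished cyclic vector $\xi_\tau$ to evaluate at, and the cleanest way to produce it in the non-unital case is exactly the paper's move of adjoining a unit and taking $\xi_\tau=1+\fN$ (or, equivalently, realizing $\xi_\tau$ as the limit of an approximate unit in $L^2(\fA,\tau)$).
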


\begin{proof}
We sketch a minor variant of the Gelfand-Neimark-Segal (GNS) construction.
We let $\fA_1$ denote the C*-unitzation of $\fA$, to which the any state cannonically extends.
We complete the space $\fA_1/\fN$ where $\fN=\{a\in \fA:\tau(a^*a)=\tau(aa^*)=0\}$ with
inner product $\langle a+\fN,b+\fN\rangle=\tau(b^*a)=\tau(ab^*)$, to get $\fH$, and let $\pi:\fA\to\fB(\fH)$ and
$\pi^\op:\fA^\op\to\fB(\fH)$ be the respective left and right regular representations on $\fA_1/\fN$, i.e.\ on $\fH$.
Then $\pi$ and $\pi^\op$ have commuting ranges, and we see that
\[
\tau(ab)=\langle \pi(a)\pi^\op(b^\op)(1+\fN),1+\fN\rangle
\]
which gives the result.
\end{proof}

In the spirit of Brown \cite[3.1.6]{brown} (really Kirchberg \cite[Prop.\ 3.2]{kirchberg}) we shall say that
a tracial state $\tau$ on $\fA$ is {\it amenable} (or {\it liftable}) provided that (\ref{eq:tauontp})
extends to a state on the minimal tensor product $\fA\otimes_{\min}\fA^\op$.

The following is well-known for discrete groups.

\begin{theorem}\label{theo:amenabletrace}
For a locally compact group $G$, $\cstarr(G)$ admits an amenable tracial state if and only if $G$ is amenable.
\end{theorem}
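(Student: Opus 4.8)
The ``if'' direction is immediate: if $G$ is amenable, then $\cstarr(G)=\mathrm{C}^*(G)$ is nuclear, so for \emph{any} tracial state $\tau$ the two tensor products $\cstarr(G)\otimes_{\max}\cstarr(G)^\op$ and $\cstarr(G)\otimes_{\min}\cstarr(G)^\op$ coincide; thus the state produced by Lemma \ref{lem:funcmaxtp} is automatically a state on the minimal tensor product, i.e.\ $\tau$ is amenable. (One does need to know that $\trstr(G)\neq\varnothing$ for amenable $G$; but amenable groups are in class [Am$^\text{IN}$]---indeed the whole group is a normal amenable subgroup---so this follows from the implication (ii)$\Rightarrow$(i) of Theorem \ref{theo:main}, or more directly from averaging $1_W$ against a relatively invariant neighbourhood after passing to a suitable quotient.)

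For the ``only if'' direction, the plan is to run the standard argument that for discrete groups links amenable traces on $\cstarr(G)$ to amenability, adapted to the locally compact setting. Suppose $\tau$ is an amenable tracial state on $\cstarr(G)$, with associated GNS-type representation as in Lemma \ref{lem:funcmaxtp}: a Hilbert space $\fH$ carrying commuting representations $\pi$ of $\cstarr(G)$ and $\pi^\op$ of $\cstarr(G)^\op$ with a common cyclic (tracial) vector $\zeta=1+\fN$ satisfying $\tau(ab)=\langle\pi(a)\pi^\op(b^\op)\zeta,\zeta\rangle$. Composing $\pi$ and $\pi^\op$ with $\lam$ gives unitary representations $\rho,\rho^\op$ of $G$ on $\fH$ with commuting ranges and $\rho(s)\rho^\op(t)\zeta$ implementing the trace; the conjugation representation $\beta(s)=\rho(s)\rho^\op(s^{-1})$ then fixes $\zeta$, reflecting traciality ($u(tst^{-1})=u(s)$). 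Amenability of $\tau$ means precisely that the product map is continuous for the minimal norm, which---by the standard reformulation (Kirchberg; Brown \cite[\S3.1]{brown})---produces a net of unit vectors $(\eta_i)$ in the Hilbert--Schmidt operators on $\fH_\lam=\bl^2(G)$ (or equivalently a net of states) that is asymptotically invariant under the $\lam\otimes\bar\lam$-type action, hence yields an invariant mean on an appropriate function space on $G$. Concretely, I would extract from amenability of $\tau$ a net of finite-rank positive trace-class operators, push it through the canonical identification to obtain a net $(f_i)$ of probability densities in $\bl^1(G)$ (or a net of positive-definite functions in $\fal(G)$ of norm one), and show the defining trace identity forces $\|\lam(s)f_i-f_i\|\to 0$, i.e.\ Reiter's property, whence $G$ is amenable.

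The main obstacle is the passage from ``$\tau$ extends to a state on $\cstarr(G)\otimes_{\min}\cstarr(G)^\op$'' to an \emph{invariance} statement at the level of $G$ that one can feed into a Hulanicki/Reiter characterization of amenability. For discrete groups this is the content of the equivalence between amenable traces and Kirchberg's factorization/liftability property, and the point that makes it work is that $\lam\otimes\bar\lam$ weakly containing the trivial representation is exactly amenability; in the locally compact case the same idea applies but one must be careful that the operators obtained live in the right space, that the Hilbert--Schmidt picture of $\cstarr(G)\otimes_{\min}\cstarr(G)^\op$ acting on $\fB(\bl^2(G))$ is valid here, and that the resulting asymptotic invariance is with respect to the genuine left translation action (using, e.g., that $\check{f}\ast f$-type vectors are automatically in $\fal(G)$ by Godement's theorem, as recalled in the background section). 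I expect no genuinely new difficulty beyond bookkeeping, so the proof should follow the discrete template with $\ell^1$ replaced by $\bl^1$ throughout, invoking Lemma \ref{lem:restriction}(i) in the degenerate form $1\in\fsal_r(G)\Rightarrow G$ amenable as the final step, or citing the standard characterization of amenability via $\lam\prec$ trivial representation directly.
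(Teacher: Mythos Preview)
Your ``if'' direction matches the paper's (nuclearity of $\cstarr(G)$ for amenable $G$, plus $1_G\in\trstr(G)$).

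For the ``only if'' direction you take a genuinely different route from the paper, and yours is both vaguer and harder. You propose to push the amenable trace through a Kirchberg/Brown finite-dimensional approximation or hypertrace characterization, land a net of almost-invariant Hilbert--Schmidt operators on $\bl^2(G)$, and then extract Reiter's property. The step where this gets murky is exactly the one you flag: why should the approximating objects live on $\bl^2(G)$ rather than on the GNS space $\fH$ of $\tau$ (which need not be $\bl^2(G)$ when $G$ is non-discrete), and how does one pass from asymptotic invariance at the C*-level to a genuine Reiter net in $\bl^1(G)$? This is repairable, but it is real work, not just bookkeeping.

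The paper bypasses all of this with a two-line trick. It first observes that $\cstarr(G)$ is \emph{symmetric}: $\lam(f)\mapsto\lam(\tilde f)^{\op}$ (with $\tilde f(t)=\Del(t)^{-1}f(t^{-1})$) is an isometric $*$-isomorphism $\cstarr(G)\to\cstarr(G)^{\op}$. Thus the state on $\cstarr(G)\otimes_{\min}\cstarr(G)^{\op}$ coming from the amenable trace $\tau$ transports to a state on $\cstarr(G)\otimes_{\min}\cstarr(G)\cong\cstarr(G\times G)$, namely
\[
\lam(f)\otimes\lam(g)\ \longmapsto\ \tau\bigl(\lam(f)\lam(\tilde g)\bigr)=\int_G f\ast\tilde g\cdot u.
\]
A direct computation shows this is $\iint f(t)g(s)\,u(ts^{-1})\,ds\,dt$, so $(s,t)\mapsto u(ts^{-1})$ lies in $\fsal_r(G\times G)$. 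Restricting to the diagonal $D=\{(s,s)\}\cong G$ gives $1\in\fsal_r(G\times G)|_D\subseteq\fsal_r(D)$, and Lemma~\ref{lem:restriction}(i) finishes. No approximation machinery, no nets, no Reiter --- just Eymard duality and the restriction lemma already set up in the paper. Your final sentence (``invoking Lemma~\ref{lem:restriction}(i) in the degenerate form $1\in\fsal_r(G)\Rightarrow G$ amenable'') is in fact the \emph{whole} argument once one has the symmetry isomorphism; the Hilbert--Schmidt detour is unnecessary.
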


\begin{proof}
If $G$ is amenable, then it is well known that $\cstarr(G)$ is nuclear 
(see for example \cite[IV.3.5.2]{blackadar}) and hence any trace is amenable.
As $G$ is amenable $1_G\in\trstr(G)$.

To see necessity
let us first recall the well-known fact that $\cstarr(G)$ is {\it symmetric}.  The map
$f\mapsto\tilde{f}$ on $\bl^1(G)$, where $\tilde{f}(t)=\frac{1}{\Del(t)}f(t^{-1})$ for a.e.\ $t$, is
isometric and anti-multiplicative.  The map $\lam(f)\mapsto\lam(\tilde{f})^\op$ extends to an
isomorphism between $\cstarr(G)$ and $\cstarr(G)^\op$.  Indeed, for $\xi,\eta$ in $\bl^2(G)$
we have
\[
\langle \lam(\tilde{f})\xi,\eta\rangle =\langle \xi,\lam(\bar{f})\eta\rangle=\langle \lam(f)\bar{\eta},\bar{\xi}\rangle
\]
and we take the supremum of the absolute value of this quantity over all choices of $\|\xi\|_2=\|\eta\|_2=1$
to see that $\|\lam(\tilde{f})\|=\|\lam(f)\|$.  

Hence if $\cstarr(G)$ admits an amenable tracial state $\tau$, with associated $u$ in $\trstr(G)$,
then
\[
\lam(f)\otimes\lam(g)\mapsto
\lam(f)\otimes\lam(\tilde{g})^\op\mapsto \tau(\lam(f)\lam(\tilde{g}))=\int_G  f\ast\tilde{g}\,u
\]
defines an element of the dual of $\cstarr(G\times G)\cong \cstarr(G)\otimes_{\min}\cstarr(G)$.
We note that
\begin{align*}
\int_G  f\ast\tilde{g}\,u&=\int_G\int_G f(t)\tilde{g}(t^{-1}s)\, dt\, u(s)\,ds \\
&=\int_G\int_G f(t)\frac{g(s^{-1})}{\Del(s)}u(ts)\,ds\,dt
=\int_G\int_Gf(t)g(s)u(ts^{-1})\,ds\,dt
\end{align*}
so $(s,t)\mapsto u(ts^{-1})$ defines an element of $\fsal_r(G\times G)$.  But then
Lemma \ref{lem:restriction} (i) provides that $1\in \fsal_r(G\times G)|_D\subseteq \fsal_r(D)$,
where $D=\{(s,s):s\in G\}\cong G$, so $G$ is amenable.
\end{proof}

The following is part of the main result of Ng \cite{ng}; we use exactly his observations for sufficiency,
but offer a different proof for necessity.

\begin{corollary}\label{cor:nuclear}
A locally compact group is amenable if and only if $\cstarr(G)$ is nuclear and admits a tracial state.
\end{corollary}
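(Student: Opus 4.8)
The plan is to derive this as an essentially formal consequence of Theorem \ref{theo:amenabletrace} together with the standard fact relating nuclearity to amenability of traces. First I would dispatch the easy direction: if $G$ is amenable, then $\cstarr(G)$ is nuclear (see \cite[IV.3.5.2]{blackadar}), and $1_G\in\trstr(G)$, so $\cstarr(G)$ admits a tracial state; both assertions were already noted in the proof of Theorem \ref{theo:amenabletrace}. For the converse, suppose $\cstarr(G)$ is nuclear and $\tau$ is a tracial state on $\cstarr(G)$. Nuclearity of $\cstarr(G)$ means that the canonical surjection $\cstarr(G)\otimes_{\max}\cstarr(G)^\op\to\cstarr(G)\otimes_{\min}\cstarr(G)^\op$ is an isomorphism (nuclearity passes to the opposite algebra, and the max/min tensor products with any C*-algebra agree when one factor is nuclear). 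By Lemma \ref{lem:funcmaxtp}, the map $a\otimes b^\op\mapsto\tau(ab)$ extends to a state on $\cstarr(G)\otimes_{\max}\cstarr(G)^\op$; composing with the inverse of the above isomorphism, it extends to a state on $\cstarr(G)\otimes_{\min}\cstarr(G)^\op$. Thus $\tau$ is an amenable tracial state, and Theorem \ref{theo:amenabletrace} gives that $G$ is amenable.

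The only point requiring a word of care is the identification $\cstarr(G)^\op$ as a nuclear C*-algebra and the fact that nuclearity of one tensor factor forces max $=$ min. Both are standard: the opposite of a nuclear C*-algebra is nuclear (conjugate-linear $*$-isomorphism, or directly from the definition via completely positive approximation), and the defining property of a nuclear C*-algebra $\fA$ is exactly that $\fA\otimes_{\max}\fB=\fA\otimes_{\min}\fB$ for every C*-algebra $\fB$. So I expect no genuine obstacle here — the corollary is a clean repackaging of the preceding theorem, and the proof is short.

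In fact, to match the phrasing "we use exactly his observations for sufficiency, but offer a different proof for necessity," I would write the proof to cite Ng \cite{ng} for the sufficiency (or simply point back to the argument already given inside Theorem \ref{theo:amenabletrace}) and present the necessity via the amenable-trace route above, which is the novel input. The write-up would be roughly: "Sufficiency is part of Ng's result \cite{ng}; see also the first paragraph of the proof of Theorem \ref{theo:amenabletrace}. For necessity, if $\cstarr(G)$ is nuclear then so is $\cstarr(G)^\op$, hence $\cstarr(G)\otimes_{\max}\cstarr(G)^\op=\cstarr(G)\otimes_{\min}\cstarr(G)^\op$; so any tracial state on $\cstarr(G)$ is amenable by Lemma \ref{lem:funcmaxtp}, and Theorem \ref{theo:amenabletrace} applies."
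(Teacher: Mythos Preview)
Your proposal is correct and matches the paper's approach exactly: the easy direction is read off from the first paragraph of the proof of Theorem \ref{theo:amenabletrace}, and the other direction observes that on a nuclear C*-algebra every tracial state is amenable (since $\max=\min$), so Theorem \ref{theo:amenabletrace} applies. The only discrepancy is terminological: the paper labels ``necessity'' and ``sufficiency'' opposite to how you do in your final paragraph (it calls the direction $G$ amenable $\Rightarrow$ nuclear $+$ trace ``necessity''), so be careful when matching your write-up to the surrounding text.
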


\begin{proof}
Necessity is proved in the  sufficiency condition of the theorem above.
It is evident that any tracial state on a nuclear C*-algebra  is amenable, which gives sufficiency.
\end{proof}

\begin{example}
Let $\alg$ be an algebraic linear group which is  
semisimple, e.g.\ $\alg=\mathrm{SL}_n$, $n\geq 2$,
so $\alg(\Que_p)$ is non-amenble.  However, $\alg(\Que_p)$ is type I by a result of Bern\v{s}te\v{\i}n
\cite{bernstein}
and hence $\cstarr(\alg(\Que_p))$ is nuclear (see, for example, \cite[IV.3.31 \& IV.3.35]{blackadar}).  Thus thanks to the
last corollary, $\cstarr(\alg(\Que_p))$ admits no tracial state.

The field $\Que_p$ of $p$-adic 
numbers is topologically singly generated as a ring:  $\Que_p=\langle 1/p\rangle$.  Hence
$\alg(\Que)$ is compactly generated.  Hence it follows Theorem \ref{theo:main}
that this group is not in the class [Am$^{\text{IN}}$].
\end{example}





\end{document}